\newtheorem{thm}{Theorem}[section] %the resolution could also be [subsection]
\newtheorem{cor}[thm]{Corollary}
\newtheorem{lem}[thm]{Lemma}
\newtheorem{prop}[thm]{Proposition}
\newtheorem{ques}[thm]{Question}
\theoremstyle{definition}
\DeclareMathOperator{\mychar}{char} %
\newcommand\operA[2]{{\if!#2!\operatorname{#1}\else{\operatorname{#1}_{#2}^{\phantom{I}}}\fi}} % To be used within Bdefs. Usage: $\operA{N}{K/F}$ produces $N_{K/F}$; $\operA{N}{}$ produces $N$.
\def\Br{{\operatorname{Br}}}
\def\Z{\mathbb{Z}}
\newcommand{\Trace}[1][]{\if!#1!\operatorname{Tr}\else{\operatorname{Tr}_{#1}^{\phantom{I}}}\fi} % Usage: $\Tr[K/F](a)$.
\long\def\forget#1\forgotten{{}} %
\def\({\left(}
\def\){\right)}
\newcommand\LAY[3][]{{\begin{array}{c}\mbox{#2} \if#1!{}\else{+}\fi \\ \mbox{#3}\end{array}}}
\def\ps@pprintTitle{%
 \let\@oddhead\@empty
 \let\@evenhead\@empty
 \def\@oddfoot{}%
 \let\@evenfoot\@oddfoot}
\newcommand{\bigperp}{%
  \mathop{\mathpalette\bigp@rp\relax}%
  \displaylimits
}
\newcommand{\bigp@rp}[2]{%
  \vcenter{
    \m@th\hbox{\scalebox{\ifx#1\displaystyle2.1\else1.5\fi}{$#1\perp$}}
  }%
}
\renewcommand{\geq}{\geqslant}
\renewcommand{\leq}{\leqslant}
\DeclareMathOperator{\Alg}{Alg}
\DeclareMathOperator{\ed}{ed}
\newif\iffurther
\begin{document}
%\begin{frontmatter}

\title[ED of CSA in Bad Characteristic]{Essential Dimension of Central Simple Algebras when the Characteristic is Bad}

\author{Adam Chapman}
\address{School of Computer Science, Academic College of Tel-Aviv-Yaffo, Rabenu Yeruham St., P.O.B 8401 Yaffo, 6818211, Israel}
\email{adam1chapman@yahoo.com}

\author{Kelly McKinnie}
\address{Department of Mathematics, University of Montana, Missoula, MT 59812, USA}
\email{kelly.mckinnie@mso.umt.edu}

\begin{abstract}
This is a survey of the existing literature, the state of the art, and a few minor new results and open questions regarding the essential dimension of central simple algebras and finite sequences of such algebras over fields whose characteristic divides the degree of the algebras under discussion. Upper and lower bounds as well as a few precise evaluations of this dimension are included.
\end{abstract}

\keywords{
Essential Dimension; Central Simple Algebras; Fields of Positive Characteristic}
\subjclass[2020]{16K20 (primary); 16K50, 19D45 (secondary)}
%\end{frontmatter}
\maketitle

\section{Introduction}

Fix a prime number $p$ and an algebraically closed field $k$. Let $\text{Fields}_k$ be the category of fields containing $k$. 
Given a covariant functor $\mathcal{F}$ from $\text{Fields}_k$ to the category of sets, we define the essential dimension of any $A$ in $\mathcal{F}(F)$, denoted $\operatorname{ed}(A)$, to be the minimal possible $d$ for which there exists $E \in \text{Fields}_k$ with $E \subseteq F$ such that $\operatorname{tr.deg}(E/k)=d$ and $A=B_F$ where $B \in \mathcal{F}(E)$. %{\km Tignol asks about $A = B_K$ when $A$ and $B$ are just sets. I think this is all wrapped up in the functor definition, so we could leave it as is or put something like ``... where $B \in \mathcal F(E)$ and $B \mapsto B_K$ under the map $\mathcal F(E) \to \mathcal F(K)$ induced from the inclusion $E \hookrightarrow K$.}
The essential dimension of $\mathcal{F}$, denoted $\ed(\mathcal{F})$, is the supremum on the essential dimension of $A$, where $A$ ranges over all elements in $\mathcal{F}(F)$ and $F$ ranges over all the fields containing $k$.
The essential $p$-dimension of $A$ in $\mathcal{F}(F)$, denoted $\operatorname{ed}_p(A)$, is defined to be the minimum on the essential dimension of $A_L$, where $L$ ranges over all field extensions of $F$ of degree prime to $p$.
The essential $p$-dimension of $\mathcal{F}$ is thus defined to be the supremum on the essential $p$-dimension of $A$, where $A$ ranges over all elements in $\mathcal{F}(F)$ and $F$ ranges over all the fields containing $k$. In particular, $\ed_p(\mathcal F)\leq \ed(\mathcal F)$.

The essential $p$-dimension is closely linked to the concept of $p$-special fields. Recall that a field $F$ is $p$-special if it has no finite extension fields of degree prime to $p$ \cite[Section 101.B]{EKM}. %Note that by \cite{Lotscher:2013}, all finite extensions of a $p$-special field are of degree $p^m$ for some positive integer $m$.
In \cite{Pfister:1995} a somewhat different definition of a $p$-special field appears: a field which has only field extensions of degree a power of $p$. The definitions coincide (see \cite[Lemma 2.1]{Lotscher:2013}). Thus, a $p$-special field $K$ is obtained as the minimal field extension of $F$ inside its algebraic closure $\bar{F}$ that has no proper prime-to-$p$ extension. In particular, $\operatorname{ed}_p(A)=\operatorname{ed}(A_K)$ for any $A \in \mathcal{F}(F)$. 

The main functor under discussion in this survey is $\Alg_{d,e}$ which is the functor mapping each field $F \supset k$ to the set of isomorphism classes of central simple algebras of degree $d$ over $F$ and exponent dividing $e$.  One is said to be studying the `bad characteristic case' if $p = \operatorname{char}(k)>0$ divides $d$, the degree of the central simple algebras and, conversely, we are in the `good characteristic' case if $p\nmid d$. 

The notion of essential dimension was initially defined by Reichstein and introduced in a series of seminal papers, including \cite{Buhler-Reichstein:1997}, \cite{Reichstein:2000} and \cite{ReichsteinYoussin:2000} (the latter is where the essential $p$-dimension was introduced). The essential dimension of $\operatorname{Alg}_{d,e}$ has received substantial attention in the literature, mostly due to its connection to linear algebraic groups of type $\text{A}$. But in general, its value is known only in a handful of cases, and in most cases we are very far from knowing its precise value.

There have been several surveys written on essential dimension over the last 15 years. Comprehensive surveys on the essential dimension of various linear algebraic groups were written by Merkurjev \cite{Merkurjev:2009}, \cite{Merkurjev:2013} and again in \cite{Merkurjev:2017}. A survey on central simple algebras, including a section on essential dimension, was written by Auel, Brussel, Garibaldi and Vishne \cite{ABGV}. %{\km I removed this sentence bc I could not figure out what to do with it: The previous year, Garibaldi and Guralnick published their own work providing upper bounds for $\operatorname{Alg}_{d,e}$ in certain cases \cite{GaribaldiGuralnick:2016}, improving results from \cite{Baek:2011}, \cite{BaekMerkurjev:2012} and \cite{ChernousovMerkurjev:2013}.}

% \sout{There are several reasons to write a new survey on this topic:}
% \begin{itemize}
% \item \sout{The previous surveys on related topics were written about a decade ago, and there is a significant body of work not covered there.}
% \item \sout{There has not been a survey dedicated specifically to $\operatorname{Alg}_{d,e}$ in the bad characteristic ({\km though the results in \cite{GaribaldiGuralnick:2016} are independent of characteristic}, it does not focus on {\km the bad characteristic or }this functor specifically).}
% \item \sout{This survey also serves as an opportunity to publish a few minor results and improvements of existing results that require such a frame to be included in a publication.}
% \end{itemize}

This survey focuses on the bounds and values of the essential dimension of $\operatorname{Alg}_{d,e}$ and some related functors in the bad characteristic. The essential dimension in the bad case is often different from the good case. For example, if one considers the functor $\mathcal{F}={_p\Br}$ mapping a field $F$ to the $p$-torsion ${_p\Br(F)}$ of $F$, then its essential $p$-dimension is infinite in the good case, but in the bad  case, it is equal to 2. This is a result of the fact that in the bad case, every algebra of exponent $p$ is Brauer equivalent to a cyclic algebra of degree $p^m$ for some $m$, and that the essential $p$-dimension of $H_{p^m}^1$ in the bad case is 1. See Section \ref{cohomology} for a description of the functor $H^1_{p^m}$ and \cite[Props 7.2, 9.2]{OfekReichstein:2024} for further details of the argument.

The survey is organized as follows: we begin with the basic background on central simple algebras and Brauer groups. Then, we introduce the description of the Brauer group in terms of differential forms (which produce the Kato-Milne cohomology groups analogous to the Galois cohomology groups in the usual case). We deduce lower bounds for the essential dimension from this setting. We then turn to the known upper bounds for the essential dimension of $\operatorname{Alg}_{2^n,2}$, with a new upper bound for $\operatorname{Alg}_{8,2}$. 

Part of this survey (Section \ref{sequence}) is dedicated to the essential dimension of sequences of linked cyclic $p$-algebras, where we cover the known results from the literature, in addition to translations of existing results not originally set in the context of essential dimension, to the language of essential dimension (see Section \ref{PrimeExponent} for the application of an observation from \cite{ChapmanQueguiner:2024} to the essential dimension of tensor products of cyclic algebras of degree $p$). We also include a few words on the essential dimension of the functor $\operatorname{Dec}_{p^m,n}$ and extend certain results from \cite{McKinnie:2017} to the non-prime case.

Table \ref{table} records the known essential dimension bounds and values in the bad characteristic case with the corresponding good characteristic bounds provided as a comparison. In the first column the functors are defined over fields containing a fixed field $k$ of characteristic $p$ and $m$ and $n$ are positive integers. Relevant citations are given in the third column and the last column indicates the section(s) of this survey where the result is discussed or improved.

\begin{table}[htbp] % Placement specifiers: h (here), t (top), b (bottom), p (page of floats)
        \centering % Centers the table on the page
        \caption{Essential Dimension values and bounds in the bad and good characteristic cases.}
        \label{table}
\begin{tabular}{|m{.18\textwidth}|m{.17\textwidth}|m{.17\textwidth}|m{.25\textwidth}|m{.085\textwidth}|} %\label{table}
    \hline
    Functor & bound/value\newline in bad char& bound/value\newline in good char & Citations&Sec. \\
	\hline
$\ed(\operatorname{Alg}_{n,n})$ & $\leq n^2-3n+1$ & same & \cite[1.4]{GaribaldiGuralnick:2016}, \newline \cite{Lemire:2004}&\ref{d4}\\
    \hline
    $\ed(\Alg_{4,2})$,\newline $\ed_2(\Alg_{4,2})$ & 3 & 4 & \cite[Thm 1.1]{Baek:2011}, \newline \cite[Rmk 8.2]{BaekMerkurjev:2012}& \ref{d4}\\
    \hline
    $\ed(\Alg_{4,4})$, \newline $\ed_2(\Alg_{4,4})$ & $\geq 4$ and $\leq 5$ & 5 &\cite{Baek:2017},\newline \cite[Page 470]{Merkurjev:2013}& \ref{d4}\\
    \hline
    $\ed (\Alg_{8,2})$, \newline $\ed_2(\Alg_{8,2})$ & $\geq 4$ and $\leq 8$ & 8 & \cite{McKinnie:2017} \& \newline this survey (resp),\newline \cite[Cor 8.3]{BaekMerkurjev:2012}& \ref{d8}\\
    \hline
    $\ed((\Z/p\Z)^{\times n})$ & 1 & $n$ & \cite{Ledet:2004}&\ref{d8}, \ref{PrimeExponent}\\
    \hline
    $\ed(\operatorname{LCA}_{p,2})$ & 3 & open & \cite{Chapman:2024} & \ref{sequence}\\
    \hline
    $\ed(\operatorname{LCA}_{2,n})$, \newline $\ed_2(\operatorname{LCA}_{2,n})$ & $n+1$ & same &  this survey& \ref{sequence}\\
    \hline
    $\ed_p(\operatorname{Dec}_{p^m,n})$ & $n+1$ & $2n$ & this survey, \newline \cite[Ex 3.7]{Merkurjev:2013} & \ref{dec}\\
    \hline
$\ed_p(\Alg_{p^n,p})$ & $\geq n+1$ &$\geq 2n$& \cite{McKinnie:2017}, \newline \cite{ChapmanMcKinnie:2020},\newline \cite[Ex 3.7]{Merkurjev:2013}&\ref{cohomology}, \ref{PrimeExponent}\\
    \hline
    $\ed(H^1_p)$,\newline $\ed(H^1_{p^2})$&1, 2 (resp)&1&\cite{Ledet:2004}&\ref{cohomology}\\
    \hline
    $\ed_p(H^1_{p^n})$&1&1&\cite{ReichsteinVistoli:2018},\newline this survey\newline when $n=2$&\ref{cohomology}\\
    \hline
    % $\ed(\Alg_{r,s})$&$\geq \ed(\operatorname{SL}_r/\mu_s)-1$&\cite[Remark 1 following 6.1]{Baek:2012}&\ref{LAG}\\
    % \hline
   
    $\ed_p(\Alg_{p,p})$ & 2 & same & \cite[Lem 8.5 (7)]{ReichsteinYoussin:2000}&\ref{primedegree}\\
    \hline
    $\ed(\Alg_{2n,2})$,\newline $n>4$ even & $\leq 2n^2-3n-6$ & $\leq 2n^2-3n-4$ & \cite[Thm 1.3]{GaribaldiGuralnick:2016}& \ref{PrimeExponent}\\
    \hline 
    $\ed_2(\Alg_{2^n,2})$ & $\leq 2^{2n-2}$ & $2^{2n-4}+2^{n-1}$ & \cite[Thm 1.1]{Baek:2012}&\ref{PrimeExponent}\\
    \hline
    \end{tabular}
    \end{table}
    
\section{Central Simple Algebras}

An associative algebra $A$ over a field $F$ is called a central simple $F$-algebra if it is simple (i.e., has no nontrivial two-sided ideals), its center is $F$ and its dimension over $F$ is finite.
In this case, $[A:F]$ is $d^2$ for some positive integer $d$ called the ``degree" of $A$, denoted $\deg A$.
In particular, $A \otimes_F \overline{F}$ is isomorphic to $M_d(\overline{F})$ where $\overline F$ is an algebraic closure of $F$. This embedding gives rise to the notion of reduced trace $\mathrm{Trd} : A \rightarrow F$, defined to be the trace of an element of $A$ via this isomorphism, and the reduced norm $\mathrm{Nrd} : A \rightarrow F$, defined to be the determinant of $A$ via this isomorphism. Both take values in $F$ despite the embedding into $M_d(\overline{F})$.
More generally, through this embedding, every element $a \in A$ has a characteristic polynomial
$f_a(x)=x^d-f_1(a) x^{d-1}-\dots-f_{d-1}(a) x-f_d(a)$ where each $f_n$ is a homogeneous form from $A$ to $F$ of degree $n$. The forms $f_1,\dots,f_d$ are the characteristic coefficients, and in particular, $f_1=\mathrm{Trd}$ and $f_d=\mathrm{Nrd}$. In particular $f_a(a)=0$.

Wedderburn proved that given a central simple algebra $A$ over $F$ there exists a division algebra $D$ with center $F$, unique up to isomorphism, and positive integer $t$ so that $A$ decomposes as $A \cong D \otimes M_t(F)$. % and $t$ is a positive integer.
$D$ is also a central simple $F$-algebra, and thus has a degree. The index of $A$, denoted $\operatorname{ind}(A)$,  is defined to be the degree of $D$.

This decomposition gives rise to the notion of Brauer equivalence: two central simple $F$-algebras are Brauer equivalent if their corresponding division algebras in Wedderburn's decomposition are isomorphic.
The set of Brauer classes of central simple algebras over a given field $F$ turns out to be a group with respect to the tensor product (over $F$) as the binary group operation.
This group, denoted $\Br(F)$ is called the ``Brauer group" of $F$.

The Brauer group $\Br(F)$ of $F$ is a torsion group, i.e., for every central simple algebra $A$ over $F$ there exists a positive integer $e$ such that $\underbrace{A \otimes \dots \otimes A}_{e \ \text{times}}$ is isomorphic to a matrix algebra over $F$. The minimal such $e$ is called the ``exponent" (or ``period" or ``order") of $A$, denoted $\exp(A)$. 
It is known that $\exp(A) | \operatorname{ind}(A)$, and that $\operatorname{ind}(A) | \exp(A)^q$ for some positive integer $q$. The minimal $q$ for which $\operatorname{ind}(A) | \exp(A)^q$ for all $A \in \Br(F)$ is called ``the Brauer dimension of $F$".

One important aspect of central simple algebras is that if the degree $d$ of $A$ is $m\cdot n$ with $\gcd(m,n)=1$, then $A \cong B \otimes C$ where $B$ and $C$ are central simple $F$-algebras with $\deg B=m$ and $\deg C=n$.
This motivates the study of the $p^m$-torsion subgroups of $\Br(F)$ for different prime integers $p$ and positive integers $m$.  These subgroups are denoted ${}_{p^m}\Br(F)$. Though this notion is independent of the characteristic, we focus on the case where $p$ is also the characteristic of the field $F$ as this survey is dedicated to the so-called bad characteristic case, which is unique in many respects. Algebras whose Brauer class is in ${}_{p^m}\Br(F)$ with $\mychar(F)=p$ are called ``$p$-algebras''.  The structure of $p$-algebras was developed by Albert in the 1930s, culminating with the result that every $p$-algebra is `cyclically representable', that is, equivalent in $\Br(F)$ to a cyclic algebra (\cite[Chapter VII, Theorem 31]{Albert:1968}).

There is a vast literature on central simple algebras, several references for the general theory include Gille and Szamuely's book \cite{GilleSzamuely:2017}, Berhuy and Oggier's book \cite{BerhuyOggier:2013}, Rowen's book \cite[Ch. 24]{Rowen:2008} and to some extent Albert's book \cite{Albert:1968}, although the last is an old reference and much of the terminology has changed since its publication.

%\section{Finitely Generated Fields}

%We take here the opportunity to say a few words on fields of finite transcendence degree $n$ over an algebraically closed field $k$ with $\mychar(k)=p>0$.
%Such fields are known to be $C_n$-fields, which means that every homogeneous polynomial equation of degree $d$ in more than $d^n$ variables has a nontrivial solution (see \cite[Section 97]{EKM}). The $C_n$-property implies by \cite{ArasonBaeza:2010} that $H_p^{n+1}(F)=0$ (see Section \ref{cohomology}). In the special case of $p=2$, the $C_n$-property implies that all quadratic $(n+1)$-fold Pfister forms over such fields are hyperbolic, which also explains why $H_2^{n+1}(F)=0$, because of the isomorphism $I_q^{n+1} F/I_q^{n+2} F \cong H_2^{n+1}(F)$. This particular phenomenon is used in Proposition \ref{seqquat} to prove that the essential dimension of linked sequences of $n$ quaternion algebras is exactly $n+1$.

%Another notion that is used in the literature and in this survey is the $p$-rank of a field. When $F$ is a finitely generated field containing $k$ (and thus $\operatorname{char}(F)=p$), $F$ is a vector space over $F^p$ of dimension $p^r$ for some $r<\infty$, and this $r$ is called the ``$p$-rank" of $F$.
%When $F$ is of transcendence degree $n$ over $k$, its $p$-rank is $n$ (see \cite[Chapter V, Section 16.6, Corollary 3]{Bourbaki}). This connection plays a role in providing a lower bound for the essential dimension of $\operatorname{Alg}_{p^n,p}$ in Section \ref{PrimeExponent}.

\section{Algebras of Degree 4}\label{d4}
In this section, $p=2$, and so $k$ is an algebraically closed field of $\operatorname{char}(k)=2$.
For a field $F$ containing $k$, any central simple $F$-algebra of degree 4 is either of exponent 4, 2 or 1 (the split case). The case of algebras of exponent 2 is well-understood. 
\begin{prop}\label{Alg42}
The essential dimension and essential 2-dimension of $\operatorname{Alg}_{4,2}$ is exactly 3.
\end{prop}

\begin{proof}
Let $A$ be a central simple algebra of degree 4 and exponent 2 over $F$. By \cite[Page 174, Theorem 2]{Albert:1968}, $A$ is isomorphic to a tensor product of two quaternion algebras. On the one hand, the essential dimension of $A$ is at most 3 by \cite{McKinnie:2017}. On the other hand, let $A$ be a division algebra over $F$. For example, the algebra $[\alpha^{-1},\beta)_{2,F} \otimes [\gamma,\alpha)_{2,F}$ over $F=k(\alpha,\beta,\gamma)$ is division (see \cite{Chapman:2020}). $A$ remains division after any odd degree extension and any such extension cannot descend to a field of transcendence degree 2 over an algebraically closed field, because such fields are linked (see \cite[Theorem 3.1]{Baeza:1982} where the statement is about fields of $u$-invariant $\leq 4$ which applies because the $u$-invariant in this case is 4 by \cite[Prop 2]{MammoneMoresiWadsworth:1991}), i.e., every two quaternion algebras share a common maximal subfield, and then every biquaternion algebra over such fields is never a division algebra. Therefore $\operatorname{ed}(\operatorname{Alg}_{4,2})=3$.

%{\bf Previous:} On the other hand, if $A$ is a division algebra (for example, the algebra $[\alpha^{-1},\beta)_{2,F} \otimes [\gamma,\alpha)_{2,F}$ over $F=k(\alpha,\beta,\gamma)$, see \cite{Chapman:2020}), then it cannot descend to a field of transcendence degree 2 over an algebraically closed field, because such fields are linked (see \cite[Theorem 3.1]{Baeza:1982} where the statement is about fields of $u$-invariant $\leq 4$ which applies because the $u$-invariant in this case is 4 by \cite[Prop 2]{MammoneMoresiWadsworth:1991}), i.e., every two quaternion algebras share a common maximal subfield, and then every biquaternion algebra over such fields is never a division algebra. Therefore $\operatorname{ed}(\operatorname{Alg}_{4,2})=3$.
\end{proof}

Compare this result to the lower bound on the essential dimension of $\Alg_{p^n,p}$ given in Proposition \ref{Algpnp}.
In \cite{Baek:2017}, it was shown that $4 \leq \operatorname{ed}(\operatorname{Alg_{4,4}}) \leq 5$.
Here is a short explanation of this result:
\begin{prop}
The essential dimension and essential 2-dimension of $\operatorname{Alg}_{4,4}$ are at least 4 and at most 5.
\end{prop}
\begin{proof} There is the invariant from \cite{Tignol:2006} that assigns a quadratic 4-fold Pfister form to any central simple algebra of degree 4 over a field of characteristic 2, which is hyperbolic if and only if the underlying algebra is cyclic. If the essential dimension of $\operatorname{Alg}_{4,4}$ were $\leq 3$, that would produce a 4-fold Pfister form over a field of transcendence degree $\leq 3$ over an algebraically closed field. Such forms are always hyperbolic  (because the $u$-invariant of such fields is at most $8$), and that would mean that every degree 4 algebra is cyclic. However, non-cyclic algebras of degree 4 exist (see \cite{Albert:1932}). Therefore, $\operatorname{ed}(\operatorname{Alg}_{4,4})\geq 4$.
For $\operatorname{ed}_2(\operatorname{Alg}_{4,4})\geq 4$ it is enough to add that anisotropic quadratic 4-fold Pfister forms remain anisotropic under odd extensions.

The upper bound for the essential dimension of $\operatorname{Alg}_{n,n}$ from \cite[Theorem 1.4]{GaribaldiGuralnick:2016} is $n^2-3n+1$, which for $n=4$ becomes 5. Since $\operatorname{ed}_2(\operatorname{Alg}_{4,4})\leq \operatorname{ed}(\operatorname{Alg}_{4,4})$, the upper bound for the essential 2-dimension follows as well.
\end{proof}

\section{Algebras of Degree 8 and Exponent 2}\label{d8}
The essential dimension of $\operatorname{Alg}_{8,2}$ has yet to be precisely evaluated.
In \cite{Baek:2011}, a lower bound of $3$ was provided, which was improved to 4 in \cite{McKinnie:2017}.
An upper bound of 10 was provided in \cite{Baek:2011}, but the argument contained an error (which, if fixed, increases the upper bound to 12). This bound was improved to 9 in \cite{Chapman:2023}. The computation made use of the essential dimension of $(\mathbb{Z}/p\mathbb{Z})^{\times n}$ over fields of characteristic $p$, which is equal to 1 (\cite{Ledet:2004}). The definition of the essential dimension of a finite group appears in \cite{Buhler-Reichstein:1997} and one can consult \cite{Ledet:2004} for the specific case of a finite $p$-group over a field of characteristic $p$. In short, the essential dimension of a finite group $G$ is the essential dimension of Galois extensions of $F$ with Galois group $G$, and in particular, $\operatorname{ed}(\mathbb{Z}/p^m\Z)=\operatorname{ed}(H^1_{p^m})$ (see Section \ref{cohomology}).

In this section, we recall the discussion from \cite{Chapman:2023} on the essential dimension of central simple algebras of degree $8$ and exponent 2 over fields of characteristic 2, and improve the upper bound from \cite{Chapman:2023} with a small modification of the argument, which avoids using the essential dimension of $(\mathbb{Z}/p\mathbb{Z})^{\times n}$.

\begin{lem}[{cf. \cite[Lemma 3.3]{Baek:2011}}]\label{Baeklike}
Suppose $F$ is a field of $\operatorname{char}(F)=2$, $K=F[i : i^2+i=\alpha]$ a separable quadratic field extension of $F$ and $Q=[a,bi+c)_{2,K}$ a quaternion algebra over $K$ with trivial corestriction to $F$, where $a,b,c \in F$. Then $Q$ descends to $F$.
\end{lem}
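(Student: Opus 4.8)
The plan is to translate everything into the $2$-torsion of the Brauer group, viewed through the Kato--Milne (differential forms) description, and to reduce the statement to a single norm equation. Write $u$ for an Artin--Schreier generator of the first slot, so that $u^2+u=a$, put $F[u]=F[u:u^2+u=a]$ and $L=K[u]=K\otimes_F F[u]$. Since $a\in F$, the symbol $[a,bi+c)_{2,K}$ is the cup product of the Artin--Schreier class of $a$ (which is restricted from $F$) with the $\mathrm{dlog}$-class of $bi+c$, so the projection formula for the corestriction gives
\[
\operatorname{cor}_{K/F}\big([a,bi+c)_{2,K}\big)=\big[a,\operatorname{N}_{K/F}(bi+c)\big)_{2,F}.
\]
Using $\sigma(i)=i+1$ for the nontrivial $\sigma\in\operatorname{Gal}(K/F)$, one computes $\operatorname{N}_{K/F}(bi+c)=(bi+c)(bi+b+c)=b^2\alpha+bc+c^2=:N$. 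Hence the hypothesis $\operatorname{cor}_{K/F}(Q)=0$ says precisely that $N$ is a norm from $F[u]/F$; since $\operatorname{N}_{F[u]/F}(x+yu)=x^2+xy+ay^2$, this means $N=x^2+xy+ay^2$ for some $x,y\in F$. (If $a$ is itself of the form $t^2+t$ then $[a,\,\cdot\,)$ is identically split and $Q\cong M_2(K)$ descends trivially, so we may assume otherwise.)

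Next I would reformulate ``$Q$ descends'' as a concrete norm condition. Because $\deg(Q_0\otimes_F K)=\deg Q_0$, a descent of the degree-$2$ algebra $Q$ must be a quaternion algebra over $F$, and after matching the already-descended first slot it has the form $[a,b_0)_{2,F}$. As Brauer-equivalent central simple algebras of equal degree are isomorphic, it suffices to find $b_0\in F^\times$ with $[a,bi+c)_{2,K}=[a,b_0)_{2,K}$ in $\operatorname{Br}(K)$; and since the kernel of $b\mapsto[a,b)_{2,K}$ is the norm group $\operatorname{N}_{L/K}(L^\times)$, this is the single requirement that $(bi+c)\,b_0^{-1}\in \operatorname{N}_{L/K}(L^\times)$ for some $b_0\in F^\times$.

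The heart of the proof is producing such a $b_0$, and this is the step I expect to be the main obstacle. Two facts feed into it. First, the characteristic-$2$ Arason-type exact sequence $H^2_2(F)\xrightarrow{\operatorname{res}}H^2_2(K)\xrightarrow{\operatorname{cor}}H^2_2(F)$ attached to the Artin--Schreier extension $K/F$ is exact in the middle, so $\operatorname{cor}_{K/F}(Q)=0$ already forces the Brauer \emph{class} of $Q$ to descend, say $[Q]=\operatorname{res}_{K/F}[Q_0]$ with $[Q_0]\in{}_2\operatorname{Br}(F)$. The delicate point is that $[Q_0]$ need not a priori have index $\le 2$. Here the key observation is that $\alpha=i^2+i$ is of the form $t^2+t$ in $K$, so every symbol $[\alpha,\,\cdot\,)_{2,F}$ becomes trivial after restriction to $K$; that is, $\operatorname{Br}(K/F)=\{[\alpha,b')_{2,F}:b'\in F^\times\}$. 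I would exploit this freedom to adjust $[Q_0]$ by a suitable class $[\alpha,b')_{2,F}\in\ker(\operatorname{res}_{K/F})$ so as to bring it onto a single symbol $[a,b_0)_{2,F}$ --- equivalently, to solve the displayed norm equation explicitly, where the representation $N=x^2+xy+ay^2$ is exactly what guarantees solvability. Controlling the index in this way, rather than merely descending the class, is the genuinely subtle part, and is where the computation of \cite[Lemma 3.3]{Baek:2011} has to be adapted to the Artin--Schreier setting.

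Finally, once $b_0$ is found, set $Q_0=[a,b_0)_{2,F}$. Then $Q_0\otimes_F K=[a,b_0)_{2,K}$ is Brauer-equivalent to $Q$ and has the same degree $2$, hence $Q_0\otimes_F K\cong Q$, so $Q$ descends to $F$, as claimed.
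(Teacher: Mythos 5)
Your setup is correct as far as it goes: the projection formula does reduce the hypothesis to the statement that $N=\operatorname{N}_{K/F}(bi+c)=b^2\alpha+bc+c^2$ satisfies $[a,N)_{2,F}=0$, and it does suffice to produce $b_0\in F^\times$ with $[a,bi+c)_{2,K}\cong[a,b_0)_{2,K}$. But the argument stops exactly where the lemma begins: you label the production of $b_0$ as ``the genuinely subtle part'' to be ``adapted'' from Baek's computation without carrying it out, and the route you sketch (descend the Brauer class via exactness of the restriction--corestriction sequence, then correct by elements of $\operatorname{Br}(K/F)=\{[\alpha,b')_{2,F}\}$ so as to control the index) is not obviously workable --- nothing you write explains why some representative of the descended class can be forced into the shape $[a,b_0)_{2,F}$, nor do you use your norm representation $N=x^2+xy+ay^2$ to exhibit $(bi+c)b_0^{-1}$ as a norm from $L/K$. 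As it stands this is a genuine gap, and it is the entire content of the lemma.

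The missing idea, which is what the paper's proof supplies, is to work directly inside $\operatorname{Br}(K)$ using the fact that $N$ \emph{factors over $K$} as $\operatorname{N}_{K/F}(bi+c)=(bi+c)(bi+b+c)$. The vanishing of $[a,N)_{2,F}$ gives $a\equiv x^2N\pmod{\wp(F)}$ for some $x\in F^\times$, and a short computation (using $\alpha=i^2+i$) shows
$$x^2N\equiv (bi+c)(x+x^2b)\pmod{\wp(K)},$$
so the first slot of $Q$ becomes a multiple of $bi+c$. The Steinberg-type relation $[uv,v)_{2,K}=[uv,u)_{2,K}$ then trades the second slot $bi+c$ for the cofactor $x+x^2b$, which lies in $F$; undoing the first-slot modifications yields $Q\cong[a,x+x^2b)_{2,K}$, i.e. the explicit descent $b_0=x+x^2b$. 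This is a purely computational symbol manipulation and never needs the exactness of $\operatorname{res}$--$\operatorname{cor}$ or any adjustment by $\ker(\operatorname{res}_{K/F})$; if you want to salvage your version, the factorization of $N$ over $K$ is the lever you are missing.
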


\begin{proof}
If $Q$ is split, the statement follows trivially. Suppose $Q$ is not split.
The corestriction of $Q$ is trivial, which means $[a,b^2\alpha+bc+c^2)_{2,F}$ is split (see \cite{MammoneMerkurjev:1991} for background on the corestriction of symbol algebras in the bad characteristic). Therefore, $a \equiv x^2 (b^2\alpha+bc+c^2) \pmod{\wp(F)}$ for some $x \in F^\times$.
Hence, $Q=[a,bi+c)_{2,K}=[x^2 (b^2\alpha+bc+c^2),bi+c)_{2,K}
=[x^2 (b^2(i^2+i)+bc+c^2),bi+c)_{2,K}=[xbi+x^2b^2i+x^2bc+xc),bi+c)_{2,K}=[(bi+c)(x+x^2b),bi+c)_{2,K}=[(bi+c)(x+x^2b),x+x^2b)_{2,K}=[a,x+x^2b)_{2,K}$.
\end{proof}

%{\km Note Tignol and Uzi both commented on a need for a reference for this Lemma. We could say: Compare Lemma \ref{Baeklike} to a similar Lemma with descent in \cite[Lem 4]{Rowen:1984}. - or maybe you have a better idea here.}

\begin{thm}\label{EDD8E2}
The essential dimension of a central simple algebra of degree 8 and exponent 2 over a field of characteristic 2 is at most 8.
\end{thm}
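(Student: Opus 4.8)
The plan is to reduce to the case of a division algebra and then, for such an algebra, to build a field of definition of transcendence degree at most $8$ out of a separable quadratic subfield and its biquaternion centralizer, using \Lref{Baeklike} to spend the corestriction relation coming from exponent $2$.

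First I would dispose of the non-division case. If $A\in\Alg_{8,2}(F)$ has $\ind(A)<8$, its underlying division algebra $D$ has degree $2$ or $4$ and exponent dividing $2$, so $A$ is Brauer equivalent to an element of $\Alg_{4,2}$; since $\ed(\Alg_{4,2})=3$ (recorded in \Tref{table}), every such $A$ descends to transcendence degree at most $3<8$. Thus it suffices to bound $\ed(A)$ when $A$ is a division algebra of degree $8$ and exponent $2$ over a field $F\supseteq k$. Being a $p$-algebra of exponent $2$, $A$ contains a separable quadratic subfield $K=F[i:i^2+i=\alpha]$ with $\alpha\in F$, a $\ZZ/2$-Galois extension with $\sigma(i)=i+1$. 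Its centralizer $B:=\Ce{K}$ is a central simple $K$-algebra of degree $4$ and exponent dividing $2$, hence a biquaternion algebra, and $[A_K]=[B]$ in $\Br(K)$. Because $\exp(A)=2$, we have $\cores[K/F]([A_K])=[A^{\otimes 2}]=0$, so $B$ is a biquaternion algebra over $K$ with trivial corestriction to $F$.

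Writing $B\cong[a_1,v_1)_{2,K}\otimes_K[a_2,v_2)_{2,K}$ with all slots in $K$ and expanding each slot $F$-linearly in $\{1,i\}$ uses eight elements of $F$, which together with $\alpha$ gives a field of definition of transcendence degree at most $9$; this recovers the bound of \cite{Chapman:2023}. The improvement to $8$ comes from spending the corestriction relation. After a common-slot normalization I would arrange one factor in the form $[a_1,b_1i+c_1)_{2,K}$ with $a_1\in F$ and $b_1,c_1\in F$, and so that this factor itself has trivial corestriction to $F$ (absorbing the corestriction of $B$ into the other factor). Then \Lref{Baeklike} applies verbatim and rewrites it as $[a_1,x+x^2b_1)_{2,K}$ with second slot in $F$: the slot that previously cost two $F$-coordinates now costs one, reducing the eight slot-coordinates to seven and the total to $8$. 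Finally I would confirm that $A$ itself, not merely $A_K$, descends: using the presentation $A=B\oplus Bu$ with $uiu^{-1}=i+1$, the class $[A]$ is determined over the small field by $[B]$ up to $\ker\bigl(\res[K/F]\bigr)=\{[\alpha,c)_{2,F}:c\in\mul F\}$, and the parameter $c$ is already among the chosen coordinates.

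The main obstacle is the normalization in the third paragraph: showing that an arbitrary biquaternion centralizer with trivial corestriction can be put in a form in which one factor simultaneously has its first slot in $F$ \emph{and} has trivial corestriction, so that \Lref{Baeklike} genuinely applies and the corestriction relation removes exactly one degree of freedom rather than being vacuously satisfied. The secondary difficulty is the descent of $A$ itself — reconstructing the crossed-product factor set, i.e. the element $u^2\in B$ and the twisted $K$-structure, over the field generated by $\alpha$ and the seven surviving slot-coordinates without introducing any further transcendentals.
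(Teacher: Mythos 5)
Your overall strategy --- pass to the centralizer $B$ of a separable quadratic subfield, use triviality of the corestriction, and descend slots via \Lref{Baeklike} --- is the right one, but the two places you flag as ``obstacles'' are exactly where all the work lies, and your plan for them does not close. The missing ingredient is Rowen's theorem (\cite{Rowen:1984}): a division algebra of degree $8$ and exponent $2$ in characteristic $2$ contains a \emph{triquadratic} separable subfield $F[x,y,z: x^2+x=\alpha,\ y^2+y=\beta,\ z^2+z=\gamma]$. This forces the centralizer of $F[x]$ to have the form $[\beta,ax+b)_{2,L}\otimes[\gamma,cx+d)_{2,L}$ with \emph{both} first slots already in $F$ --- a far stronger normalization than the one you hope to engineer (one factor with first slot in $F$ and, simultaneously, trivial corestriction). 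Without that input, your ``main obstacle'' is genuinely open: the corestriction of $B$ is a single relation in $\Br(F)$ on the product of the two factors, and there is no evident way to redistribute it so that one factor individually satisfies the hypotheses of \Lref{Baeklike}. The paper instead spends the corestriction relation through the chain lemma: it produces a common slot $e$, writes $B$ as a product of \emph{three} quaternions over $L$ each of whose first slots lies in $F$ and each of which has trivial corestriction, and introduces auxiliary elements $r,s,t$ of which $s,t$ are \emph{algebraic} over the remaining generators. That explicit bookkeeping, not a generic ``one relation removes one coordinate'' count, is what lands the bound at $8$.

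Your accounting also has a concrete leak at the final descent of $A$ itself. Descending $B$ to $K_0=F_0[i]$ only pins down $[A_{K}]$; the class $[A]$ is determined up to $\ker(\res[K/F])=\{[\alpha,f)_{2,F}\}$, and in the paper's proof $f$ is a \emph{new} transcendental, one of the eight generators of the final field $k(\alpha,\beta,\gamma,a,c,d,r,f)$. Your claim that this parameter ``is already among the chosen coordinates'' is unjustified; adding it pushes your count from $8$ back to $9$, i.e.\ no improvement over \cite{Chapman:2023}. (Your preliminary reduction to the division case is fine but unnecessary --- the paper's argument does not need it.)
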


\begin{proof}
Given such an algebra $A$ over a field $F$ containing the base field $k$ of characteristic 2, we know that $A$ contains a subfield $F[x,y,z : x^2+x=\alpha,y^2+y=\beta,z^2+z=\gamma]$ for some $\alpha,\beta,\gamma \in F$ by \cite{Rowen:1984}. 
The centralizer $B$ of $L=F[x]$ in $A$ is thus $[\beta,ax+b)_{2,L} \otimes [\gamma,cx+d)_{2,L}$.
If $a=0$, the algebra $A$ decomposes as a tensor product of three quaternion algebras, and this case was covered already in \cite{McKinnie:2017}.
Suppose $a \neq 0$.
Then, by replacing $x$ with $x+\frac{b}{a}$, one may assume that $b=0$.
Now, the corestriction of $B$ back to $F$ is trivial, so $[\beta,\alpha a^2)_{2,F}=[\gamma,\alpha c^2+cd+d^2)_{2,F}$.
By the chain lemma for quaternion algebras, there exists $e\in F$ for which $[\beta,\alpha a^2)_{2,F}=[e,\alpha a^2)_{2,F}=[e,\alpha c^2+cd+d^2)_{2,F}=[\gamma,\alpha c^2+cd+d^2)_{2,F}$.
Therefore, there exist $r,s,t,\lambda,\tau,\theta \in F$ for which $\beta+e=\lambda^2+\lambda+r^2 a^2 \alpha$, $e=\tau^2+\tau+s^2 \alpha a^2(\alpha c^2+cd+d^2)$ and $\gamma+e=\theta^2+\theta+t^2 (\alpha c^2+cd+d^2)$.
By replacing $e$ with $e+\tau^2+\tau$, $\beta$ with $\beta+\lambda^2+\lambda+\tau^2+\tau$ and $\gamma$ with $\gamma+\theta^2+\theta+\tau^2+\tau$, we get rid of the occurrences of $\lambda,\tau,\theta$.
Suppose $r,s,t \neq 0$ (when one of the elements $r,s,t$ is zero, the argument is easier, see \cite{Chapman:2023}).
So 
\begin{eqnarray*}
B&=&[\beta,ax+b)_{2,L} \otimes [\gamma,cx+d)_{2,L}\\
&\sim_{Br}& [\beta+e,ax+b)_{2,L} \otimes [\gamma+e,cx+d)_{2,L} \otimes [e,(ax+b)(cx+d))_{2,L}\\
&\sim_{Br}& [\beta+e,\frac{1}{r}+a)_{2,L} \otimes [\gamma+e,\frac{1}{t}+c)_{2,L} \otimes [e,\frac{1}{s}+ac+ad)_{2,L}
\end{eqnarray*}
and $A\sim_{Br}[\alpha,f)_{2,F} \otimes [\beta+e,\frac{1}{r}+a)_{2,F} \otimes [\gamma+e,\frac{1}{t}+c)_{2,F} \otimes [e,\frac{1}{s}+ac+ad)_{2,F}$ (the descent computations here follow Lemma \ref{Baeklike}). The proof of why the last tensor product of 4 quaternion algebras is actually of index 8 is the same as in \cite{Chapman:2023}, and thus $A$ descends to a central simple algebra of degree 8 and exponent 2 over the field $k(\alpha,\beta,\gamma,a,c,d,r,f)[s,t : s^2 \alpha a^2(\alpha c^2+cd+d^2)=\beta+r^2 a^2 \alpha=\gamma+t^2 (\alpha c^2+cd+d^2)]$, which is of transcendence degree at most 8 over $k$.
\end{proof}

It was pointed out to us by J.P. Tignol that the argument presented in the proof of \ref{EDD8E2} follows closely to the argument of Rowen's in \cite[Theorem 2']{Rowen:1984}.

\section{Sequences of Linked Cyclic Algebras}\label{sequence}

Here we consider cyclic algebras of degree $p^m$ over $F$ of $\operatorname{char}(F)=p$.
Such a cyclic algebra is denoted $[\omega,\beta)_{p^m,F}$ for some  $\beta \in F^\times$ and $\omega=(\omega_1,\dots,\omega_m)\in W_m F$ where $W_m F$ is the ring of truncated Witt vectors of length $m$ over $F$. 

As a set, $W_m F = \{(x_0,\ldots,x_{m-1})\,|\, x_i \in F\}$, but addition and multiplication are defined by Witt Polynomials making them into a ring (\cite[Thm.8.30]{Jacobson} or \cite[pg.22]{Izhboldin:2000}). For example, $W_n = \sum_{i=0}^n p^iX_i^{p^{n-i}} \in \Z[X_0,\ldots]$ is the $n$-th Witt polynomial and the sum of two truncated Witt vectors $x=(x_1,\ldots,x_m)$ and $y=(y_1,\ldots,y_m)$ is defined to be $(s_0(x,y),s_1(x,y),\ldots,s_m(x,y))$ where $s_i \in \Z[X_0,\ldots,Y_0,\ldots]$ is uniquely defined so that $W_n(s_0(x,y),\ldots) = W_n(x)+W_n(y)$ for all $n\geq 0$. In particular, $s_0(x,y) = x_0+y_0$ and $s_1(x,y) = x_1+y_1+(x_0^p+y_0^p-(x_0+y_0)^p)/p$.

%{\ac Witt vectors appear to be regular vectors $(\omega_1,\dots,\omega_m)$ of length $m$ over $F$, but in fact, their addition formula is not what one would expect, and they have a multiplication formula too, which makes it a ring:
%One thinks of $x_1,\dots,x_m$ as living in $\mathbb{Z}$, and identifies $(x_1,\dots,x_m)$ with 

%$x_1^{p^{m-1}}+px_2^{p^m-2}+\dots+p^{m-2}x_{m-1}^p+p^{m-1} x_m$.} 
% {\km should this formula be 

% $x_1^{p^{m-1}}+px_2^{p^{m-2}}+\cdots +p^{m-1}x_{m-1}^p+p^{m-1}x_m$?} 

%{\ac Then for $y_1,\dots,y_m$ in $\mathbb{Z}$, there exist unique $z_1,\dots,z_m \in \mathbb{Z}$ such that $x_1^{p^{n-1}}+px_2^{p^m-2}+\dots+p^{m-2}x_{m-1}^p+p^{m-1} x_m+y_1^{p^{n-1}}+py_2^{p^m-2}+\dots+p^{m-2}y_{m-1}^p+p^{m-1} y_m \cong z_1^{p^{n-1}}+pz_2^{p^m-2}+\dots+p^{m-2}z_{m-1}^p+p^{m-1} z_m \pmod{p^m}$. The elements $z_1,\dots,z_m$ have explicit formulas in terms of $x_1,\dots,x_m,y_1,\dots,y_m$, .e.g., $z_1=x_1+y_1$ and 
%$z_2=x_2+y_2-\sum_{\ell=1}^{p-1} \frac{(p-1)!}{\ell! (p-\ell)!} x_1^\ell y_1^{p-\ell}.$
%These formulas define the addition formula for Witt vectors of length $m$ over a field $F$ of characteristic $p$. Multiplication is defined accordingly.}

The algebra $[\omega,\beta)_{p^m,F}$ is generated by $u_1,\dots,u_m$ and $y$ subject to the relations $(u_1^p,\dots,u_m^p)-(u_1,\dots,u_m)=(\omega_1,\dots,\omega_m)$, and $(y u_1 y^{-1},\dots,y u_m y^{-1})=(u_1,\dots,u_m)+(1,0,\dots,0)$, where the arithmetic operations follow the rules of Witt vectors and $y^{p^m}=\beta$.

It is important to note that these algebras generate ${_{p^m}\Br}(F)$, and that a similar description in terms of differential forms (over $W_m F$) exists (see \cite{AravireJacobORyan:2018}). In \cite[Theorem 5.4]{ChapmanMcKinnie:2020}) it was shown that $\operatorname{ed}(\operatorname{Alg}_{p^{mn},p^m}) \geq n+1$.

In this section we are interested in the functor $\operatorname{LCA}_{p^m,n}$ which maps each $F$ containing $k$ of $\operatorname{char}(k)=p>0$ to length $n$ sequences of cyclically linked cyclic algebras of degree $p^m$ over $F$, i.e., sequences of the form $$([\omega,\beta_1)_{p^m,F},[\omega,\beta_2)_{p^m,F},\dots,[\omega,\beta_n)_{p^m,F})$$ for some $\omega \in W_m F$ and $\beta_1,\dots,\beta_n\in F^\times$.

The special case of $\operatorname{LCA}_{2,2}$ provides a characteristic 2 analogue to the functor studied in \cite[Theorem 1.3 (c)]{CerneleReichstein:2015} of triples of quaternion algebras with trivial tensor product over fields of characteristic 0: each $(Q_1,Q_2,Q_3)$ with trivial $Q_1 \otimes Q_2 \otimes Q_3$ corresponds to a linked pair $(Q_1,Q_2)$ where $Q_3$ is Brauer equivalent to $Q_1 \otimes Q_2$. The essential dimension of $\operatorname{LCA}_{2,2}$ was calculated in \cite{Chapman:2024} to be 3, in accordance with the analogous result from \cite[Theorem 1.3 (c)]{CerneleReichstein:2015}. In \cite{Chapman:2024} it was also shown that $\operatorname{ed}(\operatorname{LCA}_{p,2})=3$.

One key observation is that inseparably linked sequences have small essential dimension.
We say that $(C_1,\dots,C_n) \in \operatorname{LCA}_{p^m,n}$ are inseparably linked if they can be written as $C_1=[\omega_1,\beta)_{p^m,F},\dots,C_n=[\omega_n,\beta)_{p^m,F}$ for a fixed $\beta \in F^\times$.
In \cite[Theorem 4.7]{ChapmanFlorenceMcKinnie:2023}, it was proven that in this case, one can choose $\gamma_1,\dots,\gamma_{n-1} \in k$ and $\tau \in W_m F$ such that $C_1=[\tau,\beta)_{p^m,F}, C_2=[\tau,\beta+\gamma_1)_{p^m,F},\dots, C_n=[\tau,\beta+\gamma_{n-1})_{p^m,F}$.

\begin{cor}
The essential dimension of an inseparably linked sequence $(C_1,\dots,C_n) \in \operatorname{LCA}_{p^m,n}$ is at most $m+1$, and the essential $p$-dimension is at most 2.
\end{cor}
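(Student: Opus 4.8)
The plan is to feed the normal form from \cite{ChapmanFlorenceMcKinnie:2023} directly into the definition of essential dimension. By that result we may assume
\[
C_i = [\tau, \beta + \gamma_{i-1})_{p^m, F}, \qquad i = 1, \dots, n,
\]
with $\tau = (\tau_1,\dots,\tau_m) \in W_m F$, $\beta \in F^\times$, and $\gamma_0 = 0, \gamma_1, \dots, \gamma_{n-1} \in k$. Since $k$ is fixed in the background, the whole sequence is encoded by the finitely many elements $\tau_1, \dots, \tau_m$ together with $\beta$, and the $\gamma_i$ cost nothing because they already live in $k$.

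First I would establish the bound $\operatorname{ed} \le m+1$. Set $E = k(\tau_1, \dots, \tau_m, \beta) \subseteq F$; this field has transcendence degree at most $m+1$ over $k$. As $\tau \in W_m E$, $\beta \in E^\times$, and each $\gamma_i \in k \subseteq E$, the sequence $(C_1', \dots, C_n')$ given by $C_i' = [\tau, \beta + \gamma_{i-1})_{p^m, E}$ is a legitimate element of $\operatorname{LCA}_{p^m,n}(E)$ whose base change to $F$ is $(C_1, \dots, C_n)$. This exhibits a descent to transcendence degree $\le m+1$, giving the first claim.

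For the essential $p$-dimension I would exploit $\ed_p(H^1_{p^m}) = 1$ (\cite{ReichsteinVistoli:2018}, and this survey for $m = 2$). The Witt vector $\tau$ determines a class $[\tau]$ in $H^1_{p^m}(F) = W_m F / \wp(W_m F)$ via Artin--Schreier--Witt theory, and the cyclic algebras $[\tau, \cdot)_{p^m, F}$ depend on $\tau$ only through this class. Hence there is a prime-to-$p$ extension $L/F$, a subfield $E_1 \subseteq L$ with $\operatorname{tr.deg}(E_1/k) \le 1$, and $\tau' \in W_m E_1$ with $[\tau']_L = [\tau]_L$, i.e.\ $\tau'$ and $\tau$ differ by an element of $\wp(W_m L)$. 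Setting $E = E_1(\beta) \subseteq L$, of transcendence degree at most $2$ over $k$, the sequence $C_i'' = [\tau', \beta + \gamma_{i-1})_{p^m, E}$ lies in $\operatorname{LCA}_{p^m,n}(E)$ and base changes to $(C_i)_L$, since replacing $\tau$ by the $\wp$-equivalent $\tau'$ leaves the algebras over $L$ unchanged. Thus $\operatorname{ed}((C_i)_L) \le 2$, and as $L/F$ is prime to $p$ we conclude $\ed_p((C_1,\dots,C_n)) \le 2$.

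The routine content is the two descents; the step requiring the most care is the last one, namely that the $\ed_p$-reduction for the single functor $H^1_{p^m}$ can be combined with adjoining $\beta$ without the two interfering. Concretely, I must verify that $[\tau, \cdot)_{p^m}$ is genuinely insensitive to modifying $\tau$ modulo $\wp(W_m L)$ (so that $\tau'$ may legitimately be substituted for $\tau$, matching up the cyclic extensions and generators), and that all the $\gamma_i$ remain in $k$ after passing to $L$ so that they contribute nothing to $\operatorname{tr.deg}(E/k)$. Granting these points, both bounds follow at once.
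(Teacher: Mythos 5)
Your proof is correct and follows the same route as the paper's (very terse) argument: the bound $m+1$ comes directly from descending the normal form of \cite{ChapmanFlorenceMcKinnie:2023} to $k(\tau_1,\dots,\tau_m,\beta)$, and the bound $2$ for the essential $p$-dimension comes from applying $\ed_p(H^1_{p^m})=1$ from \cite{ReichsteinVistoli:2018} to the class of $\tau$ and then adjoining $\beta$. The details you supply --- that $[\tau,\cdot)_{p^m}$ depends on $\tau$ only modulo $\wp(W_m L)$ and that the $\gamma_i\in k$ cost nothing --- are exactly the points the paper leaves implicit.
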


\begin{proof}
The first statement follows from the result in  \cite{ChapmanFlorenceMcKinnie:2023} mentioned just above the corollary. The second statement follows from the same description of the algebra but using  \cite[Theorem 1]{ReichsteinVistoli:2018} with the group $G=\Z/p^m\Z$ we see that after a prime-to-$p$ extension, the cyclic extension descends to a field of transcendence degree 1.
\end{proof}

For pairs of linked quaternion algebras $(C_1,C_2) \in \operatorname{LCA}_{2,2}$, the converse holds true as well, i.e., if $\operatorname{ed}(C_1,C_2)=2$, then $C_1$ and $C_2$ descend to a field of transcendence degree 2, which is of $u$-invariant 4, and thus $C_1$ and $C_2$ are inseparably linked.

In \cite{Chapman:2024} it was shown that $\operatorname{ed}_p(C_1,C_2)\leq 2$ for $(C_1,C_2) \in \operatorname{LCA}_{p,2}$ if and only if $C_1 \otimes L$ and $C_2 \otimes L$ are inseparably linked for some prime-to-$p$ extension $L/F$.

\begin{ques}
Does $\operatorname{ed}(C_1,C_2)=2$ for $(C_1,C_2) \in \operatorname{LCA}_{p,2}$ imply that $C_1$ and $C_2$ are inseparably linked?
\end{ques}

%{\km Does it make sense for this comment to go above the question? Also, does it make sense to include the question of if $\operatorname{ed}_p(C_1,C_2) = 2$ does that imply that $C_1$ and $C_2$ are inseparably linked after some prime to p extension? Does \cite{Chapman:2024} show the same thing if only know $\operatorname{ed}_p = 2$?}

\begin{prop}\label{seqquat}
Both the essential dimension and the essential 2-dimension of $\operatorname{LCA}_{2,n}$ are equal to $n+1$.
\end{prop}

\begin{proof}
By \cite{ChapmanLevin:2025}, there is a well-defined invariant mapping any $n$-tuple of linked quaternion algebras $([\alpha,\beta_1)_{2,F},\dots,[\alpha,\beta_n)_{2,F}) \in \operatorname{LCA}_{2,n}(F)$ to the quadratic $(n+1)$-fold Pfister form $\langle \! \langle \beta_1,\dots,\beta_n,\alpha ] \!]$. In the generic case, the image is an anisotropic quadratic $(n+1)$-fold Pfister form, so it cannot descend to a field of transcendence degree smaller than $n+1$ over $k$. This is because $k$ is algebraically closed, and every field $E$ of transcendence degree $\leq n$ over $k$ has $u$-invariant at most $2^n$, and then all quadratic $(n+1)$-fold Pfister forms are isotropic. Therefore, the essential dimension is no less than $n+1$, and since it is clearly no greater than $n+1$, this is its exact value.

To complete the picture for the essential 2-dimension, it is enough to mention that anisotropic quadratic forms remain anisotropic under odd extensions.
\end{proof}
\begin{ques}
Is the essential ($p$-)dimension of $\operatorname{LCA}_{p,n}$ equal to $n+1$ for any prime $p$?
\end{ques}

The invariant $\langle \! \langle \beta_1,\dots,\beta_n,\alpha ] \!]$ in the proof of Proposition \ref{seqquat} of the $n$-tuple of linked quaternion algebras can also be taken to be the symbol $\alpha \frac{d \beta_1}{\beta_1} \wedge \dots \wedge \frac{d\beta_n}{\beta_n}$ in $H_2^{n+1}(F)$.
One could hope that a similar phenomenon holds for odd primes, but unfortunately, the symbol $\alpha \frac{d \beta_1}{\beta_1} \wedge \dots \wedge \frac{d\beta_n}{\beta_n}$ in $H_p^{n+1}(F)$ is not an invariant of the sequence $([\alpha,\beta_1)_{p,F},\dots,[\alpha,\beta_n)_{p,F}) \in \operatorname{LCA}_{p,n}(F)$ for $p>2$ as shown in \cite[Remark 3.6]{Chapman:2023}.

\section{The $\operatorname{Dec}_{p^m,n}$ functor}\label{dec}

In this section, we say a few words about the  functor $\operatorname{Dec}_{p^m,n}$ mapping each field $F$ containing $k$ to the subset of algebras in $\operatorname{Alg}_{p^{mn},p^m}(F)$ that are isomorphic to tensor products of $n$ cyclic algebras of degree $p^m$ over $F$.
The essential dimension and $p$-dimension of this functor were studied in \cite{McKinnie:2017} for $m=1$, and it was concluded that they are both equal to $n+1$.

\begin{thm}
The essential $p$-dimension of  $\operatorname{Dec}_{p^m,n}$ is $n+1$ for all positive integers $m,n$.
\end{thm}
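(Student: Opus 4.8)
The plan is to prove the two inequalities $\ed_p(\operatorname{Dec}_{p^m,n}) \geq n+1$ and $\ed_p(\operatorname{Dec}_{p^m,n}) \leq n+1$ separately, reducing the lower bound to the case $m=1$ already established in \cite{McKinnie:2017}, and obtaining the upper bound by a direct descent construction that exploits the characteristic-$p$ phenomena recorded in the table.

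For the lower bound, I would exploit the natural transformation sending a decomposable algebra to a decomposable Brauer class of exponent $p$. Given $A = \bigotimes_{i=1}^n [\omega^{(i)}, \beta_i)_{p^m,F} \in \operatorname{Dec}_{p^m,n}(F)$, the class $p^{m-1}[A] \in {}_p\Br(F)$ is again a sum of $n$ symbols $\sum_i [\bar\omega^{(i)}, \beta_i)_{p,F}$, where $\bar\omega^{(i)} \in F/\wp(F)$ is extracted from the Witt vector $\omega^{(i)} \in W_m F$; this assignment depends only on the isomorphism class of $A$ and is surjective onto decomposable classes of exponent $p$, since each symbol $[\alpha,\gamma)_{p,F}$ lifts to $[(\alpha,0,\dots,0),\gamma)_{p^m,F}$. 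As essential $p$-dimension cannot increase along a surjective natural transformation, the lower bound for the degree-$p$ functor transfers upward and the $m=1$ computation of \cite{McKinnie:2017} yields $\ed_p(\operatorname{Dec}_{p^m,n}) \geq n+1$. The point requiring care is that \cite{McKinnie:2017} is phrased for isomorphism classes of algebras rather than Brauer classes, so I would first confirm that the lower bound there is witnessed by a Brauer-class invariant of a generic tensor product, which is exactly what the powering map detects.

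For the upper bound I would start from a presentation $A = \bigotimes_{i=1}^n [\omega^{(i)}, \beta_i)_{p^m,F}$ and descend the two kinds of data independently. The slots $\beta_1,\dots,\beta_n \in F^\times$ contribute $n$ independent parameters and cannot in general be reduced. The Witt vectors $\omega^{(1)},\dots,\omega^{(n)} \in W_m F$ define $n$ simultaneous cyclic extensions of degree $p^m$, and the crucial input is that in characteristic $p$ these collapse, after a suitable prime-to-$p$ extension, to a single transcendence parameter $t_0$. This is the product analogue of $\ed_p(H^1_{p^m})=1$ from \cite{ReichsteinVistoli:2018} and of Ledet's $\ed((\Z/p\Z)^{\times n})=1$ from \cite{Ledet:2004}; I would either invoke a product version of these statements or argue it directly by encoding the $n$ Witt vectors in a single versal cyclic datum over $k(t_0)$. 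Granting this, $A$ descends to $k(t_0,t_1,\dots,t_n)$ with $\beta_i = t_i$, a field of transcendence degree $n+1$, giving $\ed_p(\operatorname{Dec}_{p^m,n}) \leq n+1$.

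The main obstacle is the upper bound, specifically the simultaneous descent of the $n$ cyclic $\Z/p^m$-extensions to one parameter after a prime-to-$p$ base change, together with the verification that this descent is compatible with fixing the slots $\beta_i$ as independent parameters, so that each factor remains paired with the correct slot throughout. For $m=1$ this is precisely the mechanism behind McKinnie's equality $\ed(\operatorname{Dec}_{p,n})=n+1$, so the genuinely new content is handling the higher Witt length $m$: there the essential dimension of a single $\Z/p^m$-extension is $m>1$, and the collapse to one parameter is available only for the essential $p$-dimension. This is exactly why the statement is for $\ed_p$ rather than $\ed$, and it is consistent with the inequality $\ed_p(\mathcal F)\leq\ed(\mathcal F)$.
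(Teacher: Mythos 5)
Your proposal is correct and follows essentially the same route as the paper: the lower bound via the surjection $\operatorname{Dec}_{p^m,n}\to\operatorname{Dec}_{p,n}$ (your powering by $p^{m-1}$ is the paper's truncation to the first Witt coordinate) combined with the $m=1$ case from \cite{McKinnie:2017}, and the upper bound via the collapse of the $n$ cyclic $\Z/p^m\Z$-extensions to a single parameter after a prime-to-$p$ extension. The ``product version'' you hesitate over is directly available: the main theorem of \cite{ReichsteinVistoli:2018} gives $\ed_p(G)=1$ for any finite group $G$ of order divisible by $p=\operatorname{char}(k)$, in particular for $(\Z/p^m\Z)^{\times n}$, which is exactly what the paper cites.
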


\begin{proof}
There is a surjective map from $\operatorname{Dec}_{p^m,n}(F)$ to $\operatorname{Dec}_{p,n}(F)$ given by mapping each $[(\omega_1,\dots,\omega_m),\beta)_{p^m,F}$ to $[\omega_1,\beta)_{p,F}$. Hence $\ed_p(\operatorname{Dec}_{p^m,n}) \geq \ed_p(\operatorname{Dec}_{p,n}) = n+1$ by \cite{McKinnie:2017}.
On the other hand, by \cite[Theorem 1]{ReichsteinVistoli:2018}, the essential $p$-dimension of $(\mathbb{Z}/p^m \mathbb{Z})^{\times n}$ over fields of characteristic $p$ is 1, which means that every $n$-tuple $(\tau_1,\dots,\tau_n)$ of classes in $H_{p^m}^1(F)$ descends to a field $L$ of transcendence degree 1 over $k$. Hence a tensor product of $n$ cyclic algebras $[\tau_1,\beta_1)_{p^m,F} \otimes\cdots\otimes [\tau_n,\beta_n)_{p^m,F}$ of degree $p^m$ over a given $F$ descends to $L(\beta_1,\ldots,\beta_n)$, showing that the essential dimension is at most $n+1$.
\end{proof}

See Section \ref{PrimeExponent} for more information on $\operatorname{Dec}_{p,n}$.

%{\km Looks good! Thinking about Tignol's comment about ordering - We need to do section 4 before 7 and it makes sense to do 6 and then 7 (prime degree and then prime exponent). So, that forces us to do 1, 2, 3, 4, 6, 7 in that order. We could move 5 to the end (LAGs) and we could move 8, 9, 10, 11 up if we want. So one order to consider is 1, 2, 3, 8, 9, 10, 11, 4, 6, 7, 5.}

\section{Cohomology Groups}\label{cohomology}

In the good characteristic case, Galois Cohomology gives a description of the $n$-torsion part of the Brauer group; ${_n\Br}(F) \cong H^2(F,\mu_n)$. In the bad characteristic case, the analogous cohomological construction is Kato-Milne cohomology for prime-torsion (see \cite{Kato:1982}) and Izhboldin groups in $p^m$-torsion (see \cite{Izhboldin:2000} and \cite{AravireJacobORyan:2018}). As pointed out to us by J.~P.~Tignol, though they are called ``Izhboldin groups", they were previously known to Kato and used by him (see \cite[Section 2, Corollary 4 to Proposition 2]{Kato:1980}). Both constructions utilize differential forms and are described here. 

The construction of Kato-Milne cohomology is as follows: we start with a field $F$ of $\operatorname{char}(F)=p$.
Take $\Omega_F^n$ the ring of $n$-fold differential forms (i.e., sums of terms of the form $a_1 db_1\wedge \dots \wedge db_n$ subject to the relations $d(bc)=bdc+cdb$ and $db\wedge db=0$), and consider the map $\wp : \Omega_F^n \rightarrow \Omega_F^n/d \Omega_F^{n-1}$ defined by $a \frac{db_1}{b_1} \wedge \dots \wedge\frac{db_n}{b_n} \mapsto (a^p-a) \frac{db_1}{b_1} \wedge \dots \wedge\frac{db_n}{b_n}$. The group $H_p^{n+1}(F)$ is the cokernel of this map, and it is the analogous group to $H^{n+1}(F,\mu_p^{\otimes n})$ in the more common case of fields $F$ of characteristic not $p$ containing primitive $p$th roots of unity (see \cite[Section 101]{EKM}).
In particular, for $p=2$, $H_2^{n+1}(F)$ is isomorphic to $I_q^{n+1}(F)/I_q^{n+2}(F)$, where $I_q^{n+1}(F)$ is the subgroup of the Witt group of nonsingular quadratic forms over $F$ generated by generalized $(n+1)$-fold Pfister forms. The isomorphism is given by $a \frac{db_1}{b_1}\wedge \dots \wedge\frac{db_n}{b_n} \mapsto \langle \! \langle b_1,\dots,b_n,a]\!]$ (see also \cite[Section 16]{EKM}). For $n=1$, $H_p^2(F)$ is isomorphic to ${_p\Br}(F)$, and the isomorphism is given by $a \frac{db}{b} \mapsto [a,b)_{p,F}$ (see Section \ref{primedegree}). The symbol length of a class in $H_p^{n+1}(F)$ is the number of terms of the form $a \frac{db_1}{b_1} \wedge \dots \wedge\frac{db_n}{b_n}$ needed to express it, and this number is always bounded from above by $\binom{r}{n}$ when $r$ is the $p$-rank of $F$ (see \cite[Corollary 3.4]{ChapmanMcKinnie:2020}).
Recall that when $F$ is a finitely generated field containing $k$ (and thus $\operatorname{char}(F)=p$), $F$ is a vector space over $F^p$ of dimension $p^r$ for some $r<\infty$, and this $r$ is called the ``$p$-rank" of $F$, and that when $F$ is of transcendence degree $n$ over $k$, its $p$-rank is $n$ (see \cite[Chapter V, Section 16.6, Corollary 3]{Bourbaki}).
Knowing the symbol length may say something about the decomposability of the algebra, which is the trick applied in \cite{ChapmanMcKinnie:2020} to provide lower bounds for the essential dimension of $\operatorname{Alg}_{p^n,p}$.

The group $\nu(n)_F$, defined to be the kernel of $\wp$, is the one corresponding to Milnor $K$-theory (i.e., the group $K_n(F)/pK_n(F)$), and it is the group $H^n(F,\mu_p^{\otimes n})$ in the language of \cite[Section 101]{EKM}. For $p=2$, this group is isomorphic to $I^n F/I^{n+1} F$ where $I^n F$ is the group generated by generalized bilinear $n$-fold Pfister forms inside the additive Witt group $W F$.
The Witt group is actually a ring, because the tensor product is defined, and more generally, there is a cup product that maps pairs $(A,B) \in \nu(\ell)_F \times \nu(m)_F$ to $A \wedge B \in \nu(\ell+m)_F$. Similarly, there is a cup product that maps pairs $(A,B) \in H_p^\ell(F) \times \nu(m)_F$ to $A \wedge B \in H_p^{\ell+m}(F)$. But note that since $H_p^m(F)$ and $\nu(m)_F$ need not be isomorphic, there is no cup product for pairs in $H_p^\ell(F) \times H_p^m(F)$.

In the good characteristic case, the cup product for pairs in $H^\ell(F,\mu_p) \times H^m(F,\mu_p)$ is defined, and it is applied (see \cite[Example 3.7]{Merkurjev:2013}) to produce the lower bound of $2n$ for the essential dimension of $\operatorname{Alg}_{p^n,p}$: there is a well-defined map from tensor products $(\alpha_1,\beta_1)_{p,F}\otimes \dots \otimes (\alpha_n,\beta_n)_{p,F}$ of $n$ cyclic algebras of degree $p$ over a field $F$ of characteristic 0 containing primitive $p$th roots of unity to the cup product of these algebras $(\alpha_1,\beta_1)\cup\dots \cup (\alpha_n,\beta_n)=(\alpha_1,\beta_1,\dots,\alpha_n,\beta_n)\in H^{2n}(F,\mu_p)$. Since the cohomological $p$-dimension of a field $E$ with $k \subseteq E \subseteq F$ is bounded from above by the transcendence degree of $E$ over $k$, the lower bound readily follows.

We cannot repeat the same argument in the bad characteristic case, because there is no cup product. However, the lower bound for the essential dimension of $\operatorname{Alg}_{p^n,p}$ in the good case mentioned above is $2n$ which corresponds to the essential dimension of $\operatorname{Dec}_{p,n}$ in this case. Similarly, in \cite{McKinnie:2017}, a lower bound of $n+1$ was produced for the essential dimension of $\operatorname{Alg}_{p^n,p}$ in the bad case, which again coincides with the essential dimension of $\operatorname{Dec}_{p,n}$ in this case. See Proposition \ref{Algpnp} for an explanation of this fact.

The Izhboldin groups $H_{p^m}^{n+1}(F)$ can be obtained in a similar way as a quotient of $W_m \Omega^n_F$ (\cite[Theorem 2.27]{AravireJacobORyan:2018}), which is the ring of truncated Witt vectors of length $m$ over $\Omega^n_F$ (see \cite[Chapter 8, Section 10]{Jacobson}), but a neater description in terms of generators and relations is the following (\cite{Izhboldin:2000}): take the additive group $W_m(F) \otimes \underbrace{F^\times \otimes \dots \otimes F^\times}_{n \ \text{times}}$ modulo the relations
\begin{itemize}
\item $(\omega^p-\omega) \otimes b_1 \otimes \dots \otimes b_n=0$, 
\item $(0\dots0,a,0,\ldots,0) \otimes a \otimes b_2 \otimes \dots \otimes b_n=0$, and 
\item $\omega \otimes b_1 \otimes \dots \otimes b_n=0$ where $b_i=b_j$ for some $i \neq j$,
\end{itemize}
where for each $\omega=(\omega_1,\dots,\omega_m)$, $\omega^p$ stands for $(\omega_1^p,\dots,\omega_m^p)$. Again, for $n=1$, these groups describe the $p^m$-torsion of the Brauer group, i.e., $\operatorname{H}_{p^m}^2(F) \cong \operatorname{Br}_{p^m}(F)$ with the isomorphism given by $\omega \otimes b \mapsto [\omega,b)_F$, where $[\omega,b)_F$ stands for the cyclic algebra described in Section \ref{sequence}. There is a natural embedding of $H_{p^m}^{n+1}(F)$ into $H_{p^{m+t}}^{n+1}(F)$ given by $(\omega_1,\dots,\omega_m) \otimes \beta_1 \otimes \dots \otimes \beta_n \mapsto (\underbrace{0,\dots,0}_{t \ \text{times}},\omega_1,\dots,\omega_m) \otimes \beta_1 \otimes \dots \otimes \beta_n$, and an epimorphism from $H_{p^{m+t}}^{n+1}(F)$ to $H_{p^t}^{n+1}(F)$ given by $(\omega_1,\dots,\omega_{m+t}) \otimes \beta_1 \otimes \dots \otimes \beta_n \mapsto (\omega_1,\dots,\omega_{t}) \otimes \beta_1 \otimes \dots \otimes \beta_n$, and together they form a short exact sequence
$$0 \rightarrow H_{p^m}^{n+1}(F) \rightarrow H_{p^{m+t}}^{n+1}(F) \rightarrow H_{p^t}^{n+1}(F) \rightarrow 0.$$

The group $H_{p^m}^1(F)$ is of particular interest because it describes the degree $p^m$ cyclic extensions of $F$, and is in fact $W_m F/\wp(W_m F)$. The essential dimension of the functor $H_{p^m}^1$ is the same as that of the finite group $\Z/p^m\Z$ and is unknown in general, except for the cases of $m=1$ and $2$, for which it is 1 and 2 respectively. See \cite{Ledet:2004} for further details, where it is also proven that $\operatorname{ed}(\Z/p^m\Z) \leq m$ in general, and an equality between the two is conjectured. 
However, the essential $p$-dimension of the functor $H_{p^m}^1$ is known to be 1 (see \cite{ReichsteinVistoli:2018}), which provides for an interesting case where the essential $p$-dimension is strictly smaller than the essential dimension.

Here is a small observation that gives a simple explanation for why the essential $p$-dimension of $H_{p^2}^1$ is 1:
\begin{prop}\label{p2}
Given $(a,b) \in H^1_{p^2}(F)$, there exists a prime-to-$p$ extension $L$ of $F$ of degree at most $p^2-p+1$ such that $(a,b)_L=(c,0)$ for a suitable $c\in L$.
\end{prop}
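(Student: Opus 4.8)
The plan is to realize $L$ by adjoining a single root of one explicit polynomial. Recall that $H^1_{p^2}(F)=W_2F/\wp(W_2F)$, so $(a,b)_L=(c,0)$ holds exactly when $(a,b)-(c,0)\in\wp(W_2L)$; equivalently, it suffices to find $x\in L$ for which the Witt vector $(a,b)+\wp(x,0)$ has vanishing second coordinate, and then take $c$ to be its first coordinate. I would therefore compute $(a,b)+\wp(x,0)$ explicitly and read off the resulting condition on $x$.

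Using the length-two Witt addition $(u_0,u_1)+(v_0,v_1)=\bigl(u_0+v_0,\;u_1+v_1+C(u_0,v_0)\bigr)$, where $C(U,V)=\tfrac1p\bigl(U^p+V^p-(U+V)^p\bigr)\equiv\sum_{i=1}^{p-1}\tfrac{(-1)^i}{i}U^iV^{p-i}\pmod p$, one finds $\wp(x,0)=(x^p,0)-(x,0)=\bigl(x^p-x,\;\Phi(x)\bigr)$ with $\Phi(x)=C(x^p,-x)=-\sum_{i=1}^{p-1}\tfrac1i\,x^{\,p+i(p-1)}$, so that
\[
(a,b)+\wp(x,0)=\Bigl(a+x^p-x,\;\;b+\Phi(x)+C\bigl(a,\,x^p-x\bigr)\Bigr).
\]
The decisive point is a degree count: $\Phi$ has degree exactly $p^2-p+1$ with leading coefficient $-\tfrac1{p-1}\equiv 1\pmod p$, whereas $C(a,x^p-x)$ has degree at most $p(p-1)=p^2-p$. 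Hence the second coordinate, viewed as the polynomial $P(X):=\Phi(X)+C(a,X^p-X)+b$, has degree exactly $p^2-p+1$.

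Since $p^2-p+1\equiv 1\pmod p$, the degree of $P$ is prime to $p$, so $P$ admits an irreducible factor $g$ whose degree is prime to $p$ (were every irreducible factor of degree divisible by $p$, then so would be $\deg P$). Taking $x_0$ a root of $g$ and $L:=F(x_0)$ gives $[L:F]=\deg g\le\deg P=p^2-p+1$, prime to $p$. As $P(x_0)=0$, the displayed second coordinate vanishes, and with $c:=a+x_0^p-x_0\in L$ we obtain $(a,b)+\wp(x_0,0)=(c,0)$ in $W_2L$, i.e.\ $(a,b)_L=(c,0)$ in $H^1_{p^2}(L)$, as required.

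The step I expect to require the most care is the Witt-vector computation underlying the degree count: one must confirm directly from the addition formula that the top term $x^{p^2-p+1}$ of $\Phi$ survives modulo $p$ (this is where $\tfrac1{p-1}\equiv-1$ enters) and that no term of higher degree is introduced. The case $p=2$ should be checked separately, since there the additive inverse in $W_2$ is $-(x,0)=(x,x^2)$ rather than $(-x,0)$; a direct computation then gives $\Phi(x)=x^3+x^2$ and $\deg P=3=p^2-p+1$, so the argument goes through uniformly.
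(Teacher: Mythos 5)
Your proposal is correct and follows essentially the same route as the paper's proof: compute the second Witt coordinate of $(a,b)+\wp((x,0))$ explicitly, observe it is a monic polynomial in $x$ of degree $p^2-p+1\equiv 1\pmod p$, and adjoin a root lying in a prime-to-$p$ extension, treating $p=2$ separately because of the form of $-(x,0)$. Your explicit justification that a monic polynomial of degree $\equiv 1 \pmod p$ must have an irreducible factor of degree prime to $p$ is a detail the paper leaves implicit, but the argument is the same.
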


\begin{proof}
We are looking for an element $x$ in an extension of $F$ for which the second slot of $(a,b)+(x^p,0)-(x,0)$ is zero.
When $p$ is odd, the arithmetic of truncated Witt vectors of length 2 yields the following:
$$(a,b)+(x^p,0)-(x,0)=\left(a+x^p,b-\sum_{t=1}^{p-1} (-1)^{t-1} t^{-1} x^{tp}a^{p-t}\right)+(-x,0)$$
$$=\left(a+x^p-x,b-\sum_{t=1}^{p-1} (-1)^{t-1} t^{-1}(x^{tp}a^{p-t}+(a+x^p)^{t}(-x)^{p-t})\right).$$
The second slot is thus a monic polynomial of degree $p^2-p+1$, and therefore has a root in some prime-to-$p$ extension $L$ of $F$ of degree at most $p^2-p+1$.
When $p=2$,
$$(a,b)+(x^2,0)-(x,0)=(a+x^2,b+ax^2)+(x,x^2)$$
$$=(a+x^2+x,b+ax^2+(a+x^2)x).$$
The second slot is thus a monic cubic polynomial, and therefore has a root either in $F$ or in a cubic extension $L$ of $F$ (and here $p^2-p+1=3$).
\end{proof}

\begin{ques}
Can the argument in Proposition \ref{p2} be extended to show that $\ed_p(H^1_{p^m})=1$ for $m>2$?
%Can the argument in Proposition \ref{p2} be extended to show a similar phenomenon occurs in general for $H_{p^m}^1$?
\end{ques}

\section{Algebras of prime degree}\label{primedegree}

Fix a prime degree $p$ and a field $F$ of $\operatorname{char}(F)=p$.
Recall a cyclic algebra of degree $p$ over $F$ has a particularly nice presentation as it is generated by two elements $i$ and $j$ such that $i^p-i=\alpha$ for some $\alpha \in F$, $j^p=\beta$ for some $\beta \in F^\times$
and $jij^{-1}=i+1$. This algebra is denoted by the symbol $[\alpha,\beta)_{p,F}$.
The extension $F[i]/F$ is either a cyclic field extension of degree $p$ with Galois group $\langle \sigma \rangle$ where $\sigma(i)=i+1$ or $F \times \dots \times F$, an \'etale extension. In both cases, we can define the norm $N_\alpha : F[i] \rightarrow F$ by $x \mapsto x \sigma (x)\cdots \sigma^{p-1}(x)$ where $\sigma(i)=i+1$.
The algebra $[\alpha,\beta)_{p,F}$ is ``split", i.e., isomorphic to $M_p(F)$, when $\beta=N_\alpha(y)$ for some $y \in F[i]$. If no such $y$ exists, then $[\alpha,\beta)_{p,F}$ is a division algebra.
Different symbols can represent the same isomorphism class. In particular $[\alpha,\beta)_{p,F}=[\alpha+\lambda^p-\lambda+x^p\beta^n,\beta)_{p,F}$ for any choice of $\lambda,x \in F$ and $n\in \{1,\dots,p-1\}$, and $[\alpha,\beta)_{p,F}=[\alpha,\beta N_\alpha(y))_{p,F}$ for any $y \in F[i]$ (note that we use $=$ also for $\cong$ as is common in the literature).

It is still unknown whether  every algebra of degree $p$ over $F$ is cyclic. This is known when $p=2$ or $3$, but open for $p\geq 5$. In the case of $p$-special fields, however, this is known to be true. %Recall that a field $F$ is $p$-special, if it has no finite extension fields of degree prime to $p$ \cite[Section 101.B]{EKM}. Note that by \cite{Lotscher:2013}, all finite extensions of a $p$-special field are of degree $p^m$ for some positive integer $m$.

\begin{thm}[\cite{Albert:1968}]
If $F$ is a $p$-special field, then every central simple algebra of degree $p$ over $F$ is cyclic.
\end{thm}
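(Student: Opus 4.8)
The plan is to reduce to the division-algebra case, extract a separable maximal subfield, and then exploit the $p$-special hypothesis to upgrade this subfield to a cyclic (Galois) one. First I would dispose of the trivial case: a central simple algebra of degree $p$ over $F$ is either split, hence isomorphic to $M_p(F)$ and cyclic for vacuous reasons, or a central division algebra. So assume $A$ is a central division algebra with $\deg A = p$. By the Noether--Jacobson theorem, a noncommutative central division algebra over $F$ always contains an element $a \notin F$ that is separable over $F$. Since $\deg A = p$ is prime, the subfield $F(a)$ has degree dividing $p$ and exceeding $1$, so $[F(a):F] = p$; thus $E := F(a)$ is a \emph{separable} maximal subfield of $A$, and in characteristic $p$ it is an Artin--Schreier extension $E = F[i]$ with $i^p - i = \alpha$ for a suitable $\alpha \in F^\times$.

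Next I would invoke the $p$-special hypothesis to force $E/F$ to be Galois. Because $F$ has no finite extensions of degree prime to $p$, the absolute Galois group $G = \operatorname{Gal}(F_{\mathrm{sep}}/F)$ is a pro-$p$ group, and the separable extension $E/F$ of degree $p$ corresponds to an open subgroup $H \leq G$ of index $p$. In a pro-$p$ group every open subgroup of index $p$ is normal (the core of $H$ is open normal with finite $p$-group quotient $G/\mathrm{core}$, in which the image of $H$ has index $p$ and is therefore normal). Hence $E/F$ is Galois with cyclic group of order $p$.

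Finally I would assemble the cyclic algebra via Skolem--Noether. Let $\sigma$ generate $\operatorname{Gal}(E/F)$, normalized so that $\sigma(i) = i+1$. The two $F$-algebra embeddings $E \hookrightarrow A$ and $\sigma\colon E \to E \hookrightarrow A$ are conjugate, so there exists $y \in A^\times$ with $y x y^{-1} = \sigma(x)$ for all $x \in E$. Then $E$ and $y$ generate $A$. The element $y^p$ centralizes $E$ (conjugation by $y^p$ induces $\sigma^p = \mathrm{id}$), so $y^p \in E$ since $E$ is its own centralizer in $A$; moreover $\sigma(y^p) = y\,y^p\,y^{-1} = y^p$, so $y^p$ lies in the fixed field $F$. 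Writing $y^p = \beta \in F^\times$ exhibits $A \cong [\alpha,\beta)_{p,F}$, a cyclic algebra, completing the argument.

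The one genuinely nontrivial input is the existence of a separable noncentral element (Noether--Jacobson); the remainder is the group-theoretic fact that index-$p$ subgroups of pro-$p$ groups are normal, together with a routine Skolem--Noether construction. The crux, and the only place the hypothesis is used, is the passage from ``separable of degree $p$'' to ``Galois of degree $p$'': this is precisely the step that can fail over a general field of characteristic $p$, which is why the cyclicity of degree-$p$ algebras remains open there while becoming automatic in the $p$-special setting.
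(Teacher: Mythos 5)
Your proof is correct, but it takes a genuinely different route from the one in the paper. You reduce to the division case, invoke the Noether--Jacobson theorem to produce a separable maximal subfield $E/F$ of degree $p$, use the fact that the absolute Galois group of a $p$-special field is pro-$p$ (so an open subgroup of index $p$ is automatically normal) to conclude that $E/F$ is cyclic, and finish with Skolem--Noether. The paper instead uses Albert's criterion that a degree-$p$ algebra in characteristic $p$ is cyclic if and only if it contains a noncentral element $y$ with $y^p\in F$, and produces such a $y$ by solving the system $f_1(x)=\dots=f_{p-1}(x)=0$ of characteristic-coefficient forms (of degrees $1,\dots,p-1$, all prime to $p$) on a complement of the center, via the Nullstellensatz for $p$-special fields \cite[Theorem 1.13]{Pfister:1995}. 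So the two arguments locate different kinds of elements: yours finds a separable cyclic maximal subfield directly, while the paper's finds what is essentially a purely inseparable one and lets Albert's lemma convert that into cyclicity. Your route buys familiarity---it is the standard Galois-cohomological picture and needs no form-theoretic input beyond Noether--Jacobson---whereas the paper's avoids Noether--Jacobson and Galois theory altogether, at the price of the diophantine result on systems of forms of degree prime to $p$ over $p$-special fields. Two cosmetic points: you assert the Artin--Schreier presentation $i^p-i=\alpha$ one sentence before you have established that $E/F$ is Galois (a separable degree-$p$ extension in characteristic $p$ need not be Galois a priori), so that clause should be moved after the pro-$p$ argument; and $\alpha$ need only lie in $F\setminus\wp(F)$, not in $F^\times$.
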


\begin{proof}
This is an immediate result of Theorem \cite[Chapter IV, Theorem 32]{Albert:1968}. However, we include an alternative proof here using the Nullstellensatz for $p$-special fields.

Consider $A$ of degree $p$ over $F$.
A necessary and sufficient condition for $A$ to be cyclic is the inclusion of a non-central element $y$ satisfying $y^p \in F$ (\cite[Lemma 10]{Albert:1968}).
Recall that there are the characteristic coefficients $f_1,\dots,f_{p-1},f_p$ from $A$ to $F$, and that each $x\in A$ satisfies $x^p-f_1(x) x^{p-1}-\dots-f_{p-1}(x) x-f_p(x)=0$.
It is therefore enough to find a nontrivial solution to the system
$f_1(x)=f_2(x)=\dots=f_{p-1}(x)=0$ over a subspace of $A$ consisting of noncentral elements, and then the solution $y$ will be a noncentral element satisfying $y^p \in F$.
Since the center of $A$ is a subspace of dimension 1, there exists a subspace $V$ (satisfying $A=V\oplus F$) consisting of noncentral elements (apart from zero) of dimension $p^2-1$. By the Nullstellensatz for $p$-special fields (\cite[Theorem 1.13]{Pfister:1995}), there exists a nontrivial solution to the system $f_1(x)=f_2(x)=\dots=f_{p-1}(x)=0$ over $V$, and that settles the issue.
\end{proof}

The following result is analogous to {\cite[Lemma 8.5 (7)]{ReichsteinYoussin:2000}} which was stated in the good characteristic case:
\begin{cor}\label{Algpp}
The essential $p$-dimension of $\operatorname{Alg}_{p,p}$ is $2$ when the base field $k$ is algebraically closed and of characteristic $p$.
\end{cor}

\begin{proof}
Let $F$ be a field containing $k$. Every cyclic $F$-algebra, $[\alpha,\beta)$ is defined over $k(\alpha,\beta)$, a field of transcendence degree at most 2 over the base field, $k$.  By the previous theorem, every algebra of degree $p$ over $F$ becomes cyclic under some prime-to-$p$ extension $L/F$, and therefore, the essential $p$-dimension of $\operatorname{Alg}_{p,p}(F)$ is at most 2.
In order to show that it is exactly 2, it is enough to point out one cyclic division algebra of degree $p$, because the transcendence degree bounds the cohomological $p$-dimension, and for the existence of division cyclic algebras, it must be at least 2.
By \cite[Remark]{ChapmanChapman:2017} (using valuation theory, see \cite{TignolWadsworth:2015} for background), the generic cyclic algebra $[\alpha,\beta)_{p,F}$ is division for $F=k(\alpha,\beta)$. Since it remains division under prime-to-$p$ extension, the statement readily follows.

%{\km I think here would be a good place to put in the referee's comment (3) about $\ed_p(\Br)$. Whaddya think?}

\end{proof}

%\begin{rem}
%It is important to note that in \cite{Pfister:1995}, a $p$-special field is defined as a field which has only field extensions of degree a power of $p$. The definition given a few paragraphs above sounds more inclusive than the first, but in reality they coincide (see \cite[Lemma 2.1]{Lotscher:2013}). In \cite{Lotscher:2013} they also explain how a $p$-special field is obtained as the closure under extensions to all prime-to-$p$ extensions, and thus, the relation between the $p$-special closure and the essential $p$-dimension is obvious. 
%\end{rem}

\section{Algebras of Prime Exponent}\label{PrimeExponent}

By \cite[Theorem 9.1.4]{GilleSzamuely:2006}, which dates back to Teichm\"uller, ${_p\Br}(F)$ is generated by cyclic algebras of degree $p$ for fields $F$ of $\operatorname{char}(F)=p>0$.
Tensor products of cyclic algebras of degree $p$ therefore have a special place in the theory of central simple algebras. 

As mentioned in Section \ref{cohomology}, there is a description of ${_p\Br}(F)$ in terms of differential forms. Recall that $\Omega_F$ is the ring of differential forms over $F$, i.e., sums of terms of the form $a db$, where $a,b \in F$, and $d(bc)=cdb+bdc$. The group ${_p\Br}(F)$ is isomorphic to the cokernel of $\wp$, where $\wp : \Omega_F \rightarrow \Omega_F/dF$ is given by $a\frac{db}{b} \mapsto (a^p-a)\frac{db}{b}$. The isomorphism takes $a\frac{db}{b}$ to $[a,b)_{p,F}$ and, as the argument in Section \ref{cohomology} with $n=1$ shows, the $p$-rank of $F$ bounds the symbol length of ${_p\Br}(F)$, i.e., the maximal number of cyclic algebras of degree $p$ required in order to express any given class in ${_p\Br}(F)$ is bounded from above by the $p$-rank of $F$.

Consider the functor $\operatorname{Dec}_{p,n}$ mapping each field $F$ containing $k$ to the set of algebras of degree $p^n$ that decompose as tensor products of $n$ cyclic algebras of degree $p$ over $F$. The fact that $\operatorname{ed}((\mathbb{Z}/p\mathbb{Z})^{\times n})$ is 1 over fields of characteristic $p$ (\cite{Ledet:2004}) implies that $\operatorname{ed}(\operatorname{Dec}_{p,n})\leq n+1$. 
Here is a direct proof of this fact without using the result on $\operatorname{ed}((\mathbb{Z}/p\mathbb{Z})^{\times n})$:
\begin{prop}
The essential dimension of $\operatorname{Dec}_{p,n}$ is bounded from above by $n+1$.
\end{prop}
\begin{proof}
In \cite{ChapmanQueguiner:2024} it was shown that every tensor product of $n$ cyclic algebras of degree $p$ can be written as
\begin{eqnarray*}\label{neat}
[\alpha_1,\alpha_2)_{p,F} \otimes [\alpha_2,\alpha_3)_{p,F} \otimes \dots \otimes [\alpha_n,\alpha_{n+1})_{p,F}.
\end{eqnarray*}
This observation is based on the rule $[\alpha,\beta)_{p,F}=[\alpha(\beta+x^p)\beta^{-1},\beta+x^p)_{p,F}$, which means that each $[\alpha,\beta)_{p,F} \otimes [\gamma,\delta)_{p,F}$ can be modified into $[\alpha(\beta+x^p)\beta^{-1},\beta+x^p)_{p,F} \otimes [\gamma+x^p-x,\delta)_{p,F}$ where $x=\gamma-\beta$ (i.e., $\gamma+x^p-x=\beta+x^p$), and the statement follows by induction.
\end{proof}

In \cite{McKinnie:2017}, the generic tensor product of $n$ cyclic algebras of degree $p$ was used to bound the essential dimensions of both $\operatorname{Dec}_{p,n}$ and $\operatorname{Alg}_{p^n,p}$ from below by $n+1$. 
For $p=2$ and $n=3$, this improved the lower bound produced in \cite{Baek:2011} for $\operatorname{Alg}_{8,2}$ from 3 to 4.
We recall here briefly the argument made in \cite{ChapmanMcKinnie:2020} for $n+1$ being a lower bound for $\operatorname{Alg}_{p^n,p}$:

\begin{prop}\label{Algpnp}
The essential $p$-dimension of $\operatorname{Alg}_{p^n,p}$ is at least $n+1$.
\end{prop}
\begin{proof}
When $n=1$ the essential $p$-dimension is 2 by Corollary \ref{Algpp}. Suppose $n\geq 2$.
The case of $p=n=2$ is also easy, see Proposition \ref{Alg42}.
When either $n\geq 3$ or $p \geq 3$, indecomposable algebras of degree $p^n$ and exponent $p$ exist and remain indecomposable after any prime to $p$ extension (\cite{Karpenko:1995}). If the $p$-essential dimension were $\leq n$, then a prime to $p$ extension of the algebra would descend to a field of $p$-rank $\leq n$, and therefore have symbol length at most $n$. That would mean the algebra decomposes as a tensor product of $\leq n$ cyclic algebras of degree $p$. Hence, indecomposable algebras of degree $p^n$ and exponent $p$ are of $p$-essential dimension at least $n+1$.
%{\bf Previous:} The case of $n=1$ is easy: the essential $p$-dimension is exactly 2 (Corollary \ref{Algpp}), and it bounds the essential dimension from below. Suppose $n\geq 2$.
%The case of $p=n=2$ is also easy, see Proposition \ref{Alg42}.
%When either $n\geq 3$ or $p \geq 3$, indecomposable algebras of degree $p^n$ and exponent $p$ exist (\cite{Karpenko:1995}). If the essential dimension were $\leq n$, then the algebra would descend to a field of $p$-rank $\leq n$, and therefore have symbol length at most $n$. That would mean the algebra decomposes as a tensor product of $\leq n$ cyclic algebras of degree $p$. Hence, indecomposable algebras of degree $p^n$ and exponent $p$ are of essential dimension at least $n+1$.
\end{proof}

Since the essential dimension of the generic totally decomposable algebras is $n+1$, one might expect that indecomposable algebras to be of greater essential dimension.

\begin{ques}
What is the essential dimension of the indecomposable algebras of degree $p^n$ and exponent $p$ constructed in \cite{Karpenko:1995}? 
\end{ques}

This lower bound of $n+1$ for $\operatorname{ed}(\operatorname{Alg}_{p^n,p})$ is the current state of the art. For upper bounds, in \cite[Theorem 1.3]{GaribaldiGuralnick:2016}, an upper bound of $2n^2-3n-6$ was provided for $\operatorname{ed}(\operatorname{Alg}_{2n,2})$ when $n$ is even, $n>4$ and $\operatorname{char}(k)=2$. Replacing $n$ with $2^{n-1}$ yields $\operatorname{ed}(\operatorname{Alg}_{2^n,2})\leq 2^{2n-1}-3\cdot 2^{n-1}-6$ for $n\geq 4$. %{\km Uzi asked about this ($\leftarrow$) formula. He said it should be $n\geq 3$, but I think it is correct at $n\geq 4$ bc we need $2^{n-1}>4$. But then I think the upper bound is $(2^{n-1})^2-3\cdot 2^{n-1}-6 = 2^{2n-2}-3\cdot2^{n-1}-6$.} 
For results on $\operatorname{Alg}_{4,2}$ and $\operatorname{Alg}_{8,2}$, see Sections \ref{d4} and \ref{d8}. In \cite{Baek:2012}, it was proven that $\operatorname{ed}_2(\operatorname{Alg}_{2^n,2})\leq 2^{2n-2}$.

\section*{Acknowledgments}

The authors are greatly indebted to Jean-Pierre Tignol, Uzi Vishne, and the anonymous referees for their helpful remarks on earlier versions of the manuscript. 

\bibliographystyle{abbrv}
\def\cprime{$'$}

\end{document}